\newtheorem{proposition}[equation]{Proposition}
\newcommand{\cM}{\mathcal{M}}
\def\revddots{\mathinner{\mkern1mu\raise\p@
\vbox{\kern7\p@\hbox{.}}\mkern2mu
\raise4\p@\hbox{.}\mkern2mu\raise7\p@\hbox{.}\mkern1mu}}
\def\Ind{{\rm Ind\,}}
\def\E{\mathbb{E}}
\def\Q{\mathbb{Q}}
\def\R{\mathbb{R}}
\def\cM{\mathcal{M}}
\begin{document}

\title[Separating Vector Bundle Sections by Invariant Means]
{Separating Vector Bundle Sections by Invariant Means}

\author{Gestur \' Olafsson}
\address{Department of Mathematics, Louisiana State University, Baton Rouge,
LA 70803}
\curraddr{}
\email{olafsson@math.lsu.edu}
\thanks{G\' O: Research partially supported by NSF Grant DMS-1101337}

\author{Joseph A. Wolf}

\address{Department of Mathematics, University of California,
Berkeley, CA 94720--3840}
\curraddr{}
\email{jawolf@math.berkeley.edu}
\thanks{JAW: Research partially supported by the Simons Foundation}
\subjclass[2010]{Primary 32L25; Secondary 22E46, 32L10}

\date{6 October, 2012}

\begin{abstract}We sharpen the construction of representation space in
the paper ``Principal Series Representations of Infinite Dimensional
Lie Groups II: Construction of Induced Representations''.  
We show that the principal series representation spaces constructed there,
are completions of spaces of sections of Hilbert bundles
rather than completions of quotient spaces of sections. 
\end{abstract}

\maketitle

This note is a continuation of \cite{W}, using the same notation.
We sharpen the construction of the representation spaces in 
\cite[\S\S 5B, 5C]{W} by proving that the bounded right uniformly
continuous sections of a homogeneous Hilbert space bundle $\E_\tau \to G/H$
(defined by a unitary representation $\tau$ of $H$) are separated by
means on $G/H$. 
\medskip

\centerline{\bf General Setting}
\smallskip

$G$ is a topological group, not necessarily locally compact, and $H$ is
a closed amenable subgroup.  $\tau$ is a unitary representation of $H$,
say on $E_\tau$, and $\E_\tau \to G/H$ is the associated 
homogeneous Hilbert space bundle.  The space
$RUC_b(G/H;\E_\tau)$ of bounded right uniformly continuous bounded sections
of $\E_\tau \to G/H$ consists of the right uniformly continuous bounded
functions $f: G \to E_\tau$ such that $f(xh) = \tau(h)^{-1}f(x)$ for
$x \in G$ and $h \in H$.  $G$ acts on it by $(\pi_\tau(x)f)(x') = f(x^{-1}x')$.
Since $\tau$ is unitary the pointwise norm $||f(xH)||$ is defined.  If
$\mu$ is a mean on $G/H$ we then have a seminorm on $RUC_b(G/H;\E_\tau)$
defined by $\nu_\mu(f) = \mu(||f||)$.  We denote the space of all means on 
$G/H$ by $\cM = \cM(G/H)$.
\smallskip

We use properties of means and amenability from \cite{Day1}, \cite{Day2}
and \cite{R}.

\begin{proposition}\label{5B}
If $0 \ne f \in RUC_b(G/H;\E_\tau)$ then there exists $\mu \in \cM =
\cM(G/H)$ such that $\nu_\mu(f) \ne 0$.  In other words, in 
\cite[Prop. 5.13 and Cor. 5.14]{W},
$\Gamma_{\cM}(G/H;\E_\tau)$ is the locally convex 
TVS completion of $RUC_b(G/H;\E_\tau)$.
\end{proposition}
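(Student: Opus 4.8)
The plan is to prove that the family of seminorms $\{\nu_\mu\}_{\mu \in \cM}$ separates the points of $RUC_b(G/H;\E_\tau)$, which is precisely the statement that a nonzero section is detected by at least one mean. The crucial structural fact to exploit is that $\cM$ is the space of \emph{all} means on $G/H$, so in particular it contains the point evaluations, and these alone already suffice. I would therefore deliberately \emph{not} try to produce a $G$-invariant mean: an invariant mean would in general fail to separate, since on an amenable space it can annihilate a function that is positive only on a ``small'' set (e.g.\ a translation-invariant mean on $RUC_b(\R)$ kills every compactly supported function). The whole argument thus reduces to a single Dirac mean.

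First I would check that the scalar function $\|f\|\colon x \mapsto \|f(x)\|$ genuinely descends to an element of $RUC_b(G/H)$, so that $\nu_\mu(f) = \mu(\|f\|)$ is meaningful. Right $H$-invariance is immediate from the unitarity of $\tau$, since $\|f(xh)\| = \|\tau(h)^{-1}f(x)\| = \|f(x)\|$, so $\|f\|$ factors through $G/H$. Boundedness is inherited directly from $f$, and right uniform continuity follows because the norm is $1$-Lipschitz, $\bigl|\,\|f(x)\| - \|f(y)\|\,\bigr| \le \|f(x)-f(y)\|$, so any right-uniform modulus of continuity for $f$ also serves for $\|f\|$.

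Next, since $f \ne 0$ there is some $x_0 \in G$ with $f(x_0) \ne 0$, hence $\|f\|(x_0H) = \|f(x_0)\| > 0$. I would then take $\mu$ to be the point evaluation $\delta_{x_0H}\colon \phi \mapsto \phi(x_0H)$ and verify that it is a mean in the sense of \cite{Day1,Day2,R}: it is linear, positive (if $\phi \ge 0$ then $\phi(x_0H) \ge 0$), and normalized, $\mu(1)=1$; hence $\mu \in \cM(G/H)$. For this mean $\nu_\mu(f) = \mu(\|f\|) = \|f(x_0)\| > 0$, which establishes the first assertion. The second assertion is then formal: separation of points says exactly that the seminorm topology on $RUC_b(G/H;\E_\tau)$ is Hausdorff, so no nonzero section is collapsed before completing, and the space $\Gamma_{\cM}(G/H;\E_\tau)$ of \cite[Prop.~5.13, Cor.~5.14]{W} is the completion of $RUC_b(G/H;\E_\tau)$ itself rather than of a proper Hausdorff quotient.

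The only point that demands any care at all—hardly an obstacle—is confirming that the domain of the means really contains $\|f\|$ and that point evaluations are admitted as means; once $\cM$ is read as the space of all means, the separation is immediate. It is worth noting that the amenability of $H$ is not used in this particular step: it enters the broader construction of \cite{W} (ensuring that $RUC_b(G/H;\E_\tau)$ is rich enough and that the induced representation is well defined), but the separation argument itself rests only on the availability of point-mass means.
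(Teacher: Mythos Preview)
Your argument is correct and takes a genuinely more direct route than the paper's. The paper does not use the Dirac mean $\delta_{x_0H}$ on $G/H$ directly; instead it starts with the point evaluation $\delta_x$ as a mean on $G$, forms the weak$^*$--compact convex level set $S=\{\sigma\in\cM(G):\sigma(\|f\|)=1\}$, and then invokes amenability of $H$ (Day's fixed--point theorem) to locate an $H$--fixed mean $\mu_f\in S$, which is finally identified with a mean on $G/H$. Your observation is that this detour through $\cM(G)$ is unnecessary: since $\|f\|$ already lives in $RUC_b(G/H)$, any mean on $G/H$---in particular the point mass $\delta_{x_0H}$---evaluates it, and point masses are means without any appeal to amenability. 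The paper's approach is presumably shaped by the convention in \cite{W} of realizing means on $G/H$ as $H$--invariant means on $G$; your approach shows that for the separation statement itself that realization (and hence the amenability hypothesis) is not needed. The paper effectively concedes this in its ``Other Completions'' paragraph, where it uses evaluations $\delta_{xP}$ as means on $G/P$ without further comment. What the paper's argument buys is an explicit $H$--invariant mean on $G$ with $\nu_{\mu_f}(f)=1$, should one want the separating mean to sit inside $\cM(G)^H$ rather than merely in $\cM(G/H)$; what your argument buys is a one--line proof and the insight that amenability of $H$ is irrelevant to this particular proposition.
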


\begin{proof} Let $f \in RUC_b(G/H;\E_\tau)$ be annihilated by 
all the seminorms $\nu_\mu$, $\mu \in \cM$.  Suppose that $f$ is not 
identically zero and choose $x \in G/H$ with $f(x) \ne 0$.  WE
can scale and assume $||f(x)|| = 1$.
Evaluation $\delta_x(\varphi) = \varphi(x)$ is a mean on $G$ and
$\delta_x(||f||) = 1$.
Now the compact convex set 
$S = \{\sigma \in \cM(G) \mid \sigma(||f||) = 1\}$ 
(weak$^*$ topology) 
is nonempty.  
Since $H$ is amenable it has a fixed point $\mu_f$ on $S$.  Now
$\mu_f$ is a mean on $G/H$ and the seminorm $\nu_{\mu_f}(f) = 1$.
\end{proof}
\medskip

\centerline{\bf Principal Series}
\smallskip

We specialize Proposition \ref{5B} to our setting where $G$ is a
real Lie group, e.g. $Sp(\infty;\R)$, and $P$ is a minimal
self--normalizing parabolic subgroup.  Then the amenably induced
representations $\Ind_P^G(\tau)$ of $G$ on the $\Gamma_{\cM}(G/P;\E_\tau)$,
in other words the general principal series representations of $G$,
do not require passage to quotient spaces of the $RUC_b(G/P;\E_\tau)$.
Further, the argument of \cite[Proposition 5.16]{W}, that
$\Ind_P^G(\tau)|_K = \Ind_M^K$ when the parabolic $P$ is flag-closed, 
is simplified because we need not compare quotient structures.
\medskip

\centerline{\bf Other Completions}
\smallskip

Here is a Fr\' echet space completion of $RUC_b(G/P;\E_\tau)$.  Note that
$G = \varinjlim G_n$ where the $G_n$ are real reductive groups defined over
the rational number field $\Q$ in a consistent way.  So we have the rational
group $G_\Q := \varinjlim G_{n,\Q}$.  The point is that the $G_{n,\Q}$ are
countable, so $G_\Q$ is countable, and the evaluations form a countable family
$\{\delta_{xP} \mid x \in G_\Q\}$ of means on $G/P$.  
If $f \in RUC_b(G/P;\E_\tau)$
and $||f||$ is annihilated by each of the ``rational'' seminorms
$\nu_{\delta_{xP}}$, the argument of Proposition \ref{5B} shows that $f = 0$.
The locally convex TVS structure of $RUC_b(G/P;\E_\tau)$, using only that
countable family of seminorms, defines a Fr\' echet space 
completion of $RUC_b(G/P;\E_\tau)$.  The action of $G_\Q$ extends by
continuity to this completion of $RUC_b(G/P;\E_\tau)$, but it is not
clear whether the the action of $G$ extends.
\smallskip

We enumerate $G_\Q$ by the positive integers to define a mean
$\mu = \sum_{m \geq 0} 2^{-m} \delta_{xP}^m$ on $G$. The corresponding seminorm
$\nu_\mu(f) = \sum_{m \geq 1} 2^{-m} ||f(x_m)||$ is a norm on
$RUC_b(G/P;\E_\tau)$. It defines a pre Hilbert space structure 
on $RUC_b(G/P;\E_\tau)$ by
$\langle f, h \rangle = \sum_{m \geq 1} 2^{-m} \langle f(x_m), h(x_m) \rangle$.
Again, the action of $G$ on $RUC_b(G/P;\E_\tau)$ does not appear to 
extend by continuity to the corresponding Hilbert space completion.

\end{document}